\newcommand*\pfqskip{8mu}
\newcommand*\pfq{\begingroup
        \catcode`\,\active
        \def ,{\mskip\pfqskip\relax}%
        \dopfq
}
\def\dopfq#1#2#3#4#5{%
        {}_{#1}F_{#2}\left(\genfrac..{0pt}{}{#3}{#4}\,\Big\rvert\,#5\right)%
        \endgroup
}
\newtheorem{proposition}{Proposition}[section]
\title{Two-variable $-1$ Jacobi polynomials}
\author[1]{Vincent X. Genest}
\author[1]{Jean-Michel Lemay}
\author[1]{Luc Vinet}
\author[2]{Alexei Zhedanov}
\affil[1]{Centre de recherches math\'ematiques, Universit\'e de Montr\'eal, P.O. Box 6128, Centre-ville Station, Montr\'eal, Canada, H3C 3J7}
\affil[2]{Donetsk Institute for Physics and Technology, Donetsk 340114, Ukraine}
\date{}
\begin{document}
\maketitle
\thispagestyle{empty}
\hrule
\begin{abstract}
\noindent
A two-variable generalization of the Big $-1$ Jacobi polynomials is introduced and characterized. These bivariate polynomials are constructed as a coupled product of two univariate Big $-1$ Jacobi polynomials. Their orthogonality measure is obtained. Their bispectral properties (eigenvalue equations and recurrence relations) are determined through a limiting process from the two-variable Big $q$-Jacobi polynomials of Lewanowicz and Wo\'zny. An alternative derivation of the weight function using Pearson-type equations is presented.\medskip

\noindent \textbf{Keywords:} Bivariate Orthogonal Polynomials, Big $-1$ Jacobi polynomials
\\
\noindent \textbf{AMS classification numbers:} 33C50 
\end{abstract}
\hrule
\section{Introduction}
The purpose of this paper is to introduce and study a family of bivariate Big $-1$ Jacobi polynomials. These two-variable polynomials, which shall be denoted by $\mathcal{J}_{n,k}(x,y)$, depend on four real parameters $\alpha,\beta,\gamma,\delta$ such that $\alpha,\beta,\gamma>-1$, $\delta\neq 1$ and are defined as
\begin{align}
\label{Poly}
\mathcal{J}_{n,k}(x,y)=J_{n-k}\left(y;\alpha,2k+\beta+\gamma+1,(-1)^{k}\delta\right)\,\rho_{k}(y)\,J_{k}\left(\frac{x}{y};\gamma,\beta,\frac{\delta}{y}\right),
\end{align}
with $k=0,1,\ldots$ and $n=k,k+1,\ldots$, where
\begin{align*}
\rho_{k}(y)=
\begin{cases}
y^{k}\left(1-\frac{\delta^2}{y^2}\right)^{\frac{k}{2}}, & \text{$k$ even},
\\
y^{k}\left(1-\frac{\delta^2}{y^2}\right)^{\frac{k-1}{2}} \left(1+\frac{\delta}{y}\right), & \text{$k$ odd},
\end{cases}
\end{align*}
and where $J_{n}(x;a,b,c)$ denotes the one-variable Big $-1$ Jacobi polynomials \cite{Vinet_Zhedanov_2012-07} (see section 1.1). It will be shown that these polynomials are orthogonal with respect to a positive measure defined on the disjoint union of four triangular domains in the real plane. The polynomials $\mathcal{J}_{n,k}(x,y)$ will also be identified as a $q\rightarrow -1$ limit of the two-variable Big $q$-Jacobi polynomials introduced by Lewanowicz and Wo\'zny in \cite{Lewanowicz_Wozny_2010-01}, which generalize the bivariate little $q$-Jacobi polynomials introduced by Dunkl in \cite{Dunkl_1980}. The bispectral properties of the Big $-1$ Jacobi polynomials will be determined from this identification. The polynomials $\mathcal{J}_{n,k}(x,y)$ will be shown to satisfy an explicit vector-type three term recurrence relation and it will be seen that they are the joint eigenfunctions of a pair of commuting first order differential operators involving reflections. By solving the Pearson-type system of equations arising from the symmetrization of these differential/difference operators, the weight function for the polynomials $\mathcal{J}_{n,k}(x,y)$ will be recovered.  

The defining formula of the two-variable Big $-1$ Jacobi polynomials \eqref{Poly} is reminiscent of the expressions found in \cite{Kwon_Lee_Littlejohn_2001} for the Krall-Sheffer polynomials  \cite{Krall_Sheffer_1967}, which, as shown in \cite{Harnad-2001}, are directly related to two-dimensional superintegrable systems on spaces with constants curvature (see \cite{Miller_Post_Winter_2013} for a review of superintegrable systems). The polynomials $\mathcal{J}_{n,k}(x,y)$ do not belong to the Krall-Sheffer classification, as they will be seen to obey \emph{first order} differential equations with reflections. The results of \cite{Harnad-2001} however suggest that the polynomials $\mathcal{J}_{n,k}(x,y)$ could be related to two-dimensional integrable systems with reflections such as the ones recently considered in \cite{Genest_Ismail_Vinet_Zhedanov_2013, Genest_Vinet_Zhedanov_2013, Genest_CMP_2014, Genest_2015_1}. This fact motivates our examination of the polynomials $\mathcal{J}_{n,k}(x,y)$.

\subsection{The Big $-1$ Jacobi polynomials}
Let us now review some of the properties of the Big $-1$ Jacobi polynomials which shall be needed in the following. The Big $-1$ Jacobi polynomials, denoted by $J_{n}(x;a,b,c)$, were introduced in \cite{Vinet_Zhedanov_2012-07} as a $q=-1$ limit of the Big $q$-Jacobi polynomials \cite{Koekoek-2010}. They are part of the Bannai-Ito scheme of $-1$ orthogonal polynomials \cite{Genest-2013-02-1, Genest-2013-09-02, Tsujimoto-2012-03, Vinet-2011-01}. They are defined by
\begin{align}
\label{Jacobi}
J_{n}(x;a,b,c)=
\begin{cases}
\pfq{2}{1}{-\frac{n}{2},\frac{n+a+b+2}{2}}{\frac{a+1}{2}}{\frac{1-x^2}{1-c^2}}+\frac{n(1-x)}{(1+c)(a+1)}\; \pfq{2}{1}{1-\frac{n}{2}, \frac{n+a+b+2}{2}}{\frac{a+3}{2}}{\frac{1-x^2}{1-c^2}}, & \text{$n$ even},
\\[3mm]
\pfq{2}{1}{-\frac{n-1}{2}, \frac{n+a+b+1}{2}}{\frac{a+1}{2}}{\frac{1-x^2}{1-c^2}}-\frac{(n+a+b+1)(1-x)}{(1+c)(a+1)} \; \pfq{2}{1}{-\frac{n-1}{2}, \frac{n+a+b+3}{2}}{\frac{a+3}{2}}{\frac{1-x^2}{1-c^2}},& \text{$n$ odd},
\end{cases}
\end{align}
where ${}_{2}F_{1}$ is the standard Gauss hypergeometric function \cite{Andrews_Askey_Roy_1999}; when no confusion can arise, we shall simply write $J_{n}(x)$ instead of $J_{n}(x;a,b,c)$. The polynomials \eqref{Jacobi} satisfy the recurrence relation
\begin{align*}
x\,J_{n}(x)=A_{n}\, J_{n+1}(x)+(1-A_{n}-C_{n})\,J_{n}(x)+ C_{n}\,J_{n-1}(x),
\end{align*}
with coefficients
\begin{align*}
A_{n}=
\begin{cases}
\frac{(n+a+1)(c+1)}{2n+a+b+2}, & \text{$n$ even},
\\
\frac{(1-c)(n+a+b+1)}{2n+a+b+2}, & \text{$n$ odd},
\end{cases}
\qquad 
C_{n}=
\begin{cases}
\frac{n(1-c)}{2n+a+b}, & \text{$n$ even},
\\
\frac{(n+b)(1+c)}{2n+a+b}, & \text{$n$ odd}.
\end{cases}
\end{align*}
It can be seen that for $a,b>-1$ and $|c|\neq 1$ the polynomials $J_{n}(x)$ are positive-definite. The Big $-1$ Jacobi polynomials satisfy the eigenvalue equation
\begin{align}
\label{Eigen-Uni}
\mathcal{L} J_{n}(x)=\left\{(-1)^{n}\left(n+a/2+b/2+1/2\right)\right\} J_{n}(x),
\end{align}
where $\mathcal{L}$ is the most general first-order differential operator with reflection preserving the space of polynomials of a given degree. This operator has the expression
\begin{align*}
\mathcal{L}=\left[\frac{(x+c)(x-1)}{x}\right]\partial_{x}R+\left[\frac{c}{2x^2}+\frac{c a-b}{2x}\right](R-\mathbb{I})+\left[\frac{a+b+1}{2}\right]R,
\end{align*}
where $R$ is the reflection operator, i.e. $Rf(x)=f(-x)$, and $\mathbb{I}$ stands for the identity. The orthogonality relation of the Big $-1$ Jacobi polynomials is as follows. For $|c|<1$, one has 
\begin{align}
\label{Ortho-1}
\int_{\mathcal{C}}J_{n}(x;a,b,c)\,J_{m}(x;a,b,c) \;\omega(x;a,b,c)\;\mathrm{d}x=\left[\frac{(1-c^2)^{\frac{a+b+2}{2}}}{(1+c)}\right] h_{n}(a,b)\,\delta_{nm},
\end{align}
where the interval is $\mathcal{C}=[-1,-|c|]\cup [|c|,1]$ and the weight function reads
\begin{align}
\label{Weight-1}
\omega(x;a,b,c)=\theta(x)\,(1+x)\,(x-c)\,(x^2-c^2)^{\frac{b-1}{2}}\,(1-x^2)^{\frac{a-1}{2}},
\end{align}
with $\theta(x)$ is the sign function. The normalization factor $h_{n}$ is given by
\begin{align}
\label{hn}
h_{n}(a,b)=
\begin{cases}
\frac{2\;\Gamma\left(\frac{n+b+1}{2}\right)\Gamma\left(\frac{n+a+3}{2}\right) \left(\frac{n}{2}\right)!}{(n+a+1)\;\Gamma\left(\frac{n+a+b+2}{2}\right)\left(\frac{a+1}{2}\right)_{\frac{n}{2}}^2}, & \text{$n$ even},
\\[3mm]
\frac{(n+a+b+1)\;\Gamma\left(\frac{n+b+2}{2}\right)\Gamma \left(\frac{n+a+2}{2}\right) \left(\frac{n-1}{2}\right)!}{2\,\Gamma\left(\frac{n+a+b+3}{2}\right) \left(\frac{a+1}{2}\right)_{\frac{n+1}{2}}^2}, & \text{$n$ odd},
\end{cases}
\end{align}
where $(a)_{n}$ stands for the Pochhammer symbol \cite{Andrews_Askey_Roy_1999}. For $|c|>1$, one has
\begin{align}
\label{Ortho-Tilde}
\int_{\widetilde{\mathcal{C}}}J_{n}(x;a,b,c)\,J_{m}(x;a,b,c) \;\widetilde{\omega}(x;a,b,c)\;\mathrm{d}x=\left[\frac{\theta(c)(c^2-1)^{\frac{a+b+2}{2}}}{1+c}\right]\widetilde{h}_{n}(a,b)\,\delta_{nm},
\end{align}
where the interval is $\widetilde{\mathcal{C}}=[-|c|,-1]\cup [1,|c|]$ and the weight function reads
\begin{align}
\label{Weight-Tilde}
\widetilde{\omega}(x;a,b,c)=\theta(c\,x)\,(1+x)\,(c-x)\,(c^2-x^2)^{\frac{b-1}{2}}\,(x^2-1)^{\frac{a-1}{2}}.
\end{align}
In this case the normalization factor has the expression 
\begin{align}
\label{Hn-Tilde}
\widetilde{h}_{n}(a,b)=
\begin{cases}
\frac{2\;\Gamma\left(\frac{n+b+1}{2}\right)\Gamma\left(\frac{n+a+3}{2}\right)\left(\frac{n}{2}\right)! }{(n+a+1) \Gamma\left(\frac{n+a+b+2}{2}\right)\,\left(\frac{a+1}{2}\right)_{\frac{n}{2}}^2}, & \text{$n$ even},
\\[3mm]
\frac{(n+a+b+1)\,\Gamma\left(\frac{n+b+2}{2}\right)\Gamma\left(\frac{n+a+2}{2}\right)\left(\frac{n-1}{2}\right)!}{2\;\Gamma\left(\frac{n+a+b+3}{2}\right)\,\left(\frac{a+1}{2}\right)_{\frac{n+1}{2}}^2} & \text{$n$ odd}.
\end{cases}
\end{align}
The normalization factors $h_{n}$ and $\widetilde{h}_{n}$ were not derived in \cite{Vinet_Zhedanov_2012-07}. They have been obtained here using the orthogonality relation for the Chihara polynomials provided in \cite{Genest-2013-09-02} and the fact that the Big $-1$ Jacobi polynomials are related to the latter by a Christoffel transformation. The details of this derivation are presented in appendix A.
\section{Orthogonality of the two-variable Big $-1$ Jacobi polynomials}
We now prove the orthogonality property of the two-variable Big $-1$ Jacobi polynomials.
\begin{proposition}
Let $\alpha, \beta,\gamma>-1$ and $|\delta|<1$. The two-variable Big $-1$ Jacobi polynomials defined by \eqref{Poly} satisfy the orthogonality relation
\begin{align}
\label{Ortho-2Var}
\int_{D_{y}}\int_{D_{x}}\;\mathcal{J}_{n,k}(x,y)\;\mathcal{J}_{m,\ell}(x,y)\;W(x,y)\;\mathrm{d}x\;\mathrm{d}y=H_{nk}
\;\delta_{k\ell}\delta_{nm},
\end{align}
with respect to the weight function
\begin{align}
\label{Weight-2Var}
W(x,y)=\theta(x\,y)|y|^{\beta+\gamma}(1+y)\left(1+\frac{x}{y}\right)\left(\frac{x-\delta}{y}\right)(1-y^2)^{\frac{\alpha-1}{2}}\left(1-\frac{x^2}{y^2}\right)^{\frac{\gamma-1}{2}}\left(\frac{x^2-\delta^2}{y^2}\right)^{\frac{\beta-1}{2}}.
\end{align}
The integration domain is prescribed by
\begin{align}
\label{Domain}
D_{x}=[-|y|,-|\delta|]\cup [|\delta|, |y|], \qquad D_{y}=[-1,-|\delta|]\cup[|\delta|,1],
\end{align}
and the normalization factor $H_{nk}$ has the expression
\begin{align*}
H_{nk}=\left[\frac{(1-\delta^2)^{\frac{2k+\alpha+\beta+\gamma+3}{2}}}{(1+(-1)^{k}\delta)}\right]\;h_{k}(\gamma,\beta)\;h_{n-k}(\alpha,2k+\gamma+\beta+1),
\end{align*}
where $h_{n}(a,b)$ is given by \eqref{hn}.
\end{proposition}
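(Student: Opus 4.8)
The plan is to evaluate the double integral \eqref{Ortho-2Var} as an iterated integral, integrating first over $x$ at fixed $y$ and then over $y$, so that each stage collapses onto the univariate orthogonality relation \eqref{Ortho-1}. The enabling observation is that the bivariate weight factorizes: setting $\Omega(y)=|y|^{\beta+\gamma}(1+y)(1-y^2)^{\frac{\alpha-1}{2}}$ and using $\theta(xy)=\theta(x/y)$, one checks directly from \eqref{Weight-1} and \eqref{Weight-2Var} that
\[
W(x,y)=\omega\!\left(\tfrac{x}{y};\gamma,\beta,\tfrac{\delta}{y}\right)\Omega(y).
\]
This is precisely what makes the factor $J_k(x/y;\gamma,\beta,\delta/y)$ appearing in \eqref{Poly} orthogonalizable at fixed $y$.

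Next I would carry out the inner integral. Since the product $\mathcal{J}_{n,k}\mathcal{J}_{m,\ell}$ splits as $J_{n-k}(y)\,J_{m-\ell}(y)\,\rho_k(y)\rho_\ell(y)\,J_k(x/y)J_\ell(x/y)$ with the first two factors independent of $x$, the $x$-integral reduces to $\int_{D_x}J_k(x/y)J_\ell(x/y)\,\omega(x/y;\gamma,\beta,\delta/y)\,\mathrm{d}x$. The substitution $u=x/y$ maps $D_x$ onto $\mathcal{C}=[-1,-|\delta/y|]\cup[|\delta/y|,1]$ with $\mathrm{d}x=|y|\,\mathrm{d}u$, and since $|y|\ge|\delta|$ on $D_y$ one has $|\delta/y|<1$ for almost every $y$, so \eqref{Ortho-1} applies with $a=\gamma$, $b=\beta$, $c=\delta/y$. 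This yields the Kronecker delta $\delta_{k\ell}$, the constant $h_k(\gamma,\beta)$, and the $y$-dependent factor $|y|\,(1-\delta^2/y^2)^{\frac{\gamma+\beta+2}{2}}/(1+\delta/y)$.

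With $k=\ell$ enforced, the remaining $y$-integrand carries the weight $\rho_k(y)^2\,\Omega(y)\,|y|\,(1-\delta^2/y^2)^{\frac{\gamma+\beta+2}{2}}/(1+\delta/y)$, and the crux of the argument is to show that this equals precisely $\omega(y;\alpha,2k+\beta+\gamma+1,(-1)^k\delta)$. I expect this reassembly to be the main obstacle, since it demands careful bookkeeping of the half-integer exponents and of the sign factors ($|y|$ versus $y$, and $\theta(y)$), and it must be performed separately for $k$ even and odd. The design of $\rho_k(y)$ is exactly what makes it work. For $k$ even, $\rho_k(y)^2=y^{2k}(1-\delta^2/y^2)^k$ raises the power of $(1-\delta^2/y^2)$ to $k+\frac{\beta+\gamma}{2}+1$; writing $1-\delta^2/y^2=(y^2-\delta^2)/y^2$, collecting the $|y|$-powers, and using $1/(1+\delta/y)=y/(y+\delta)$ to trim one factor of $(y+\delta)$ then leaves $(y^2-\delta^2)^{k+\frac{\beta+\gamma}{2}}(y-\delta)$, i.e. $c=+\delta$. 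For $k$ odd, the extra factor $(1+\delta/y)$ built into $\rho_k(y)$ cancels the $1/(1+\delta/y)$ from the inner normalization and leaves a single $(y+\delta)$, which is the $(y-c)$ factor with $c=-\delta$. In both parities the shift of the middle parameter to $2k+\beta+\gamma+1$ is produced by these same $(1-\delta^2/y^2)$-powers, matching the $(-1)^k\delta$ prescribed in \eqref{Poly}.

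Finally, the $y$-integral becomes $\int_{D_y}J_{n-k}(y)\,J_{m-k}(y)\,\omega(y;\alpha,2k+\beta+\gamma+1,(-1)^k\delta)\,\mathrm{d}y$ over $D_y=\mathcal{C}$ with $c=(-1)^k\delta$ (and $2k+\beta+\gamma+1>-1$ since $\beta,\gamma>-1$), so a second application of \eqref{Ortho-1} produces $\delta_{n-k,m-k}=\delta_{nm}$ together with the factor $(1-\delta^2)^{\frac{2k+\alpha+\beta+\gamma+3}{2}}/(1+(-1)^k\delta)$ and the constant $h_{n-k}(\alpha,2k+\beta+\gamma+1)$. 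Combining this with the $h_k(\gamma,\beta)$ harvested from the inner integral reproduces $H_{nk}$ exactly, completing the proof. The interchange of the two integrations is legitimate by Tonelli's theorem, as $W$ is nonnegative on the domain for $\alpha,\beta,\gamma>-1$ and $|\delta|<1$.
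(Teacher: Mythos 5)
Your proof is correct and follows essentially the same route as the paper's own: factorize the integrand, evaluate the inner $x$-integral via the substitution $u=x/y$ together with the univariate orthogonality relation \eqref{Ortho-1}, then reassemble the residual $y$-weight (treating $k$ even and $k$ odd separately, exactly as the paper does) into $\omega\left(y;\alpha,2k+\beta+\gamma+1,(-1)^{k}\delta\right)$ and apply \eqref{Ortho-1} a second time to obtain $\delta_{nm}$ and the stated $H_{nk}$. Your explicit statement of the factorization $W(x,y)=\omega\left(\tfrac{x}{y};\gamma,\beta,\tfrac{\delta}{y}\right)\Omega(y)$ and the appeal to Tonelli's theorem are minor refinements of, not departures from, the paper's argument.
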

\begin{proof}
We proceed by a direct calculation. We denote the orthogonality integral by
\begin{align*}
I=\int_{D_{y}}\int_{D_{x}}\;\mathcal{J}_{n,k}(x,y)\;\mathcal{J}_{m,\ell}(x,y)\;W(x,y)\;\mathrm{d}x\;\mathrm{d}y.
\end{align*}
Upon using the expressions \eqref{Poly} and \eqref{Weight-2Var} in the above, one writes
\begin{multline*}
I
=\int_{D_{y}} J_{n-k}(y;\alpha,2k+\gamma+\beta+1,(-1)^{k}\delta)\;\;J_{m-\ell}(y,\alpha,2k+\gamma+\beta+1,(-1)^{k}\delta)
\\
\times \left[\rho_{k}(y)\rho_{\ell}(y)|y|^{\beta+\gamma}(1+y)(1-y^2)^{\frac{\alpha-1}{2}}\right]\;\mathrm{d}y\;\;
\\
\times \int_{D_{x}} J_{k}\left(\frac{x}{y}; \gamma,\beta,\frac{\delta}{y}\right)J_{\ell}\left(\frac{x}{y}; \gamma,\beta,\frac{\delta}{y}\right)
\left[\theta\left(\frac{x}{y}\right)\left(1+\frac{x}{y}\right)\left(\frac{x-\delta}{y}\right)\left(1-\frac{x^2}{y^2}\right)^{\frac{\gamma-1}{2}}\left(\frac{x^2-\delta^2}{y^2}\right)^{\frac{\beta-1}{2}}\right]\;\mathrm{d}x.
\end{multline*}
The integral over $D_{x}$ is directly evaluated using the change of variables $u=x/y$ and comparing with the orthogonality relation \eqref{Ortho-1}. The result is thus
\begin{multline*}
I=h_{k}(\gamma,\beta)\;\delta_{k\ell}\times
\int_{D_y}J_{n-k}(y;\alpha,2k+\gamma+\beta+1,(-1)^{k}\delta)\;\;J_{m-\ell}(y,\alpha,2k+\gamma+\beta+1,(-1)^{k}\delta)
\\
\times \rho_{k}(y)\;\rho_{\ell}(y) \left[\theta(y)\;(1+y)\;(1-y^2)^{\frac{\alpha-1}{2}}\;\frac{(y^2-\delta^2)^{\frac{2+\gamma+\beta}{2}}}{(y+\delta)}\right]\;\mathrm{d}y.
\end{multline*}
Assuming that $k=\ell$ is an even integer, the integral takes the form
\begin{align*}
I=h_{k}(\gamma,\beta)\delta_{k\ell}\times \int_{D_y}J_{n-k}(y;\alpha,2k+\gamma+\beta+1,\delta)\;\;J_{m-\ell}(y,\alpha,2k+\gamma+\beta+1,\delta)
\\
\times (y^2-\delta^2)^{k} \left[\theta(y)\;(1+y)\;(1-y^2)^{\frac{\alpha-1}{2}}\;\frac{(y^2-\delta^2)^{\frac{2+\gamma+\beta}{2}}}{(y+\delta)}\right]\;\mathrm{d}y,
\end{align*}
which in view of \eqref{Ortho-1} yields
\begin{align*}
I=h_{k}(\gamma,\beta)h_{n-k}(\alpha,2k+\gamma+\beta+1)\left[\frac{(1-\delta^2)^{\frac{2k+\alpha+\beta+\gamma+3}{2}}}{1+\delta}\right]\delta_{k\ell}\delta_{mn}.
\end{align*}
Assuming that $k=\ell$ is an odd integer, the integral takes the form
\begin{align*}
I=h_{k}(\gamma,\beta)\delta_{k\ell}\times \int_{D_y}J_{n-k}(y;\alpha,2k+\gamma+\beta+1,-\delta)\;\;J_{m-\ell}(y,\alpha,2k+\gamma+\beta+1,-\delta)
\\
\times (y^2-\delta^2)^{k-1}(y+\delta)^{2} \left[\theta(y)\;(1+y)\;(1-y^2)^{\frac{\alpha-1}{2}}\;\frac{(y^2-\delta^2)^{\frac{2+\gamma+\beta}{2}}}{(y+\delta)}\right]\;\mathrm{d}y,
\end{align*}
which given \eqref{Ortho-1} gives
\begin{align*}
I=h_{k}(\gamma,\beta)h_{n-k}(\alpha,2k+\gamma+\beta+1)\left[\frac{(1-\delta^2)^{\frac{2k+\alpha+\beta+\gamma+3}{2}}}{1-\delta}\right]\delta_{k\ell}\delta_{mn}.
\end{align*}
Upon combining the $k$ even and $k$ odd cases, one finds \eqref{Ortho-2Var}. This completes the proof.
\end{proof}
It is not difficult to see that the region \eqref{Domain} corresponds to the disjoint union of four triangular domains. The $|\delta|=1/5$ case is illustrated in Figure 1.
\begin{center}
\scalebox{0.3}{\includegraphics{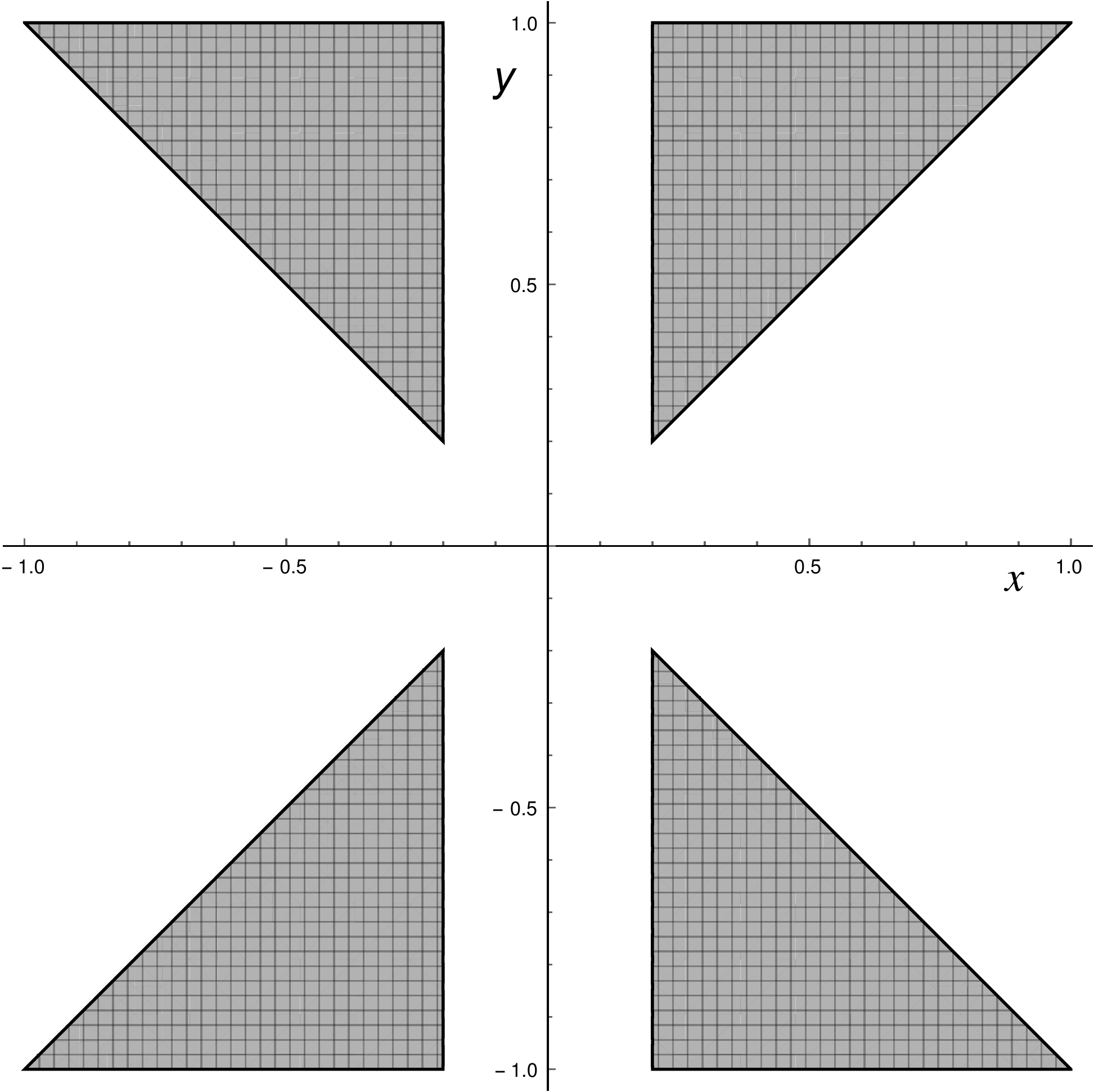}}
\\
\footnotesize
Figure 1: Orthogonality region for $|\delta|=1/5$
\end{center} 
For $\alpha,\beta,\gamma>-1$ and $|\delta|<1$, it can be verified that the weight function \eqref{Weight-2Var} is positive on \eqref{Domain}. The orthogonality relation for $|\delta|>1$ can be obtained in a similar fashion. The result is as follows.
\begin{proposition}
Let $\alpha, \beta,\gamma>-1$ and $|\delta|>1$. The two-variable Big $-1$ Jacobi polynomials defined by \eqref{Poly} satisfy the orthogonality relation
\begin{align*}
\int_{\widetilde{D}_{y}}\int_{\widetilde{D}_{x}}\;\mathcal{J}_{n,k}(x,y)\;\mathcal{J}_{m,\ell}(x,y)\;\widetilde{W}(x,y)\;\mathrm{d}x\;\mathrm{d}y=\widetilde{H}_{nk}
\;\delta_{k\ell}\delta_{nm},
\end{align*}
with respect to the weight function
\begin{align}
\label{Weight-2Var-2}
\widetilde{W}(x,y)=\theta(\delta\,x\,y) |y|^{\gamma+\beta}(1+y)\left(1+\frac{x}{y}\right)\left(\frac{\delta-x}{y}\right)(y^2-1)^{\frac{\alpha-1}{2}}\left(\frac{x^2}{y^2}-1\right)^{\frac{\gamma-1}{2}}\left(\frac{\delta^2-x^2}{y^2}\right)^{\frac{\beta-1}{2}}.
\end{align}
The integration domain is
\begin{align}
\label{Domain-2}
\widetilde{D}_{x}=[-|\delta|, -|y|]\cup [|y|,|\delta|],\qquad \widetilde{D}_{y}=[-|\delta|,1]\cup [1, |\delta|],
\end{align}
and the normalization factor is of the form
\begin{align*}
\widetilde{H}_{nk}=(-1)^{k}\theta(\delta)\left[\frac{(\delta^2-1)^{\frac{2k+\alpha+\beta+\gamma+3}{2}}}{1+(-1)^{k}\delta}\right]\widetilde{h}_{k}(\gamma,\beta)\,\widetilde{h}_{n-k}(\alpha,2k+\beta+\gamma+1),
\end{align*}
where $\widetilde{h}_{n}(a,b)$ is given by \eqref{Hn-Tilde}. 
\end{proposition}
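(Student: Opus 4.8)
The plan is to reproduce, almost verbatim, the strategy of the preceding proposition, the single structural change being that every appeal to the one-variable orthogonality relation \eqref{Ortho-1} (valid for $|c|<1$) is replaced by an appeal to its counterpart \eqref{Ortho-Tilde} (valid for $|c|>1$). I would denote the double integral by $\widetilde{I}$, insert the definition \eqref{Poly} and the weight \eqref{Weight-2Var-2}, and observe that $\widetilde{W}(x,y)$ separates into a $y$-dependent factor $\rho_{k}(y)\rho_{\ell}(y)|y|^{\gamma+\beta}(1+y)(y^2-1)^{(\alpha-1)/2}$ and an $x$-dependent factor, using that $\theta(\delta xy)=\theta(\delta x/y)$ since $y^2>0$. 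This factorizes $\widetilde{I}$ into an inner integral over $\widetilde{D}_{x}$ times an outer integral over $\widetilde{D}_{y}$.

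Next I would evaluate the inner integral by the substitution $u=x/y$. Under this map the $x$-weight becomes $\widetilde{\omega}(u;\gamma,\beta,\delta/y)$ up to a sign, the domain $\widetilde{D}_{x}$ maps onto $\widetilde{\mathcal{C}}=[-|c|,-1]\cup[1,|c|]$ with $c=\delta/y$, and crucially $|c|=|\delta|/|y|\geq 1$ for every $y\in\widetilde{D}_{y}$ (since $1\leq|y|\leq|\delta|$), so that \eqref{Ortho-Tilde}, and not \eqref{Ortho-1}, is the relevant relation. Applying it produces the factor $\widetilde{h}_{k}(\gamma,\beta)\,\delta_{k\ell}$ together with the $y$-dependent bracket $\theta(\delta/y)\,((\delta/y)^2-1)^{(\gamma+\beta+2)/2}/(1+\delta/y)$. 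The main bookkeeping obstacle lies precisely here: one must combine the sign mismatch $\theta(\delta xy)=\theta(y)\,\theta((\delta/y)u)$ with the orientation of the Jacobian $\mathrm{d}x=|y|\,\mathrm{d}u$, the surviving power $|y|^{\gamma+\beta}$ from the $y$-weight, and the explicit factor from $|y|\,\mathrm{sign}(y)=y$. After these cancellations the outer integrand collapses to $\widetilde{h}_{k}(\gamma,\beta)\,\delta_{k\ell}\,J_{n-k}(y)J_{m-\ell}(y)\,\rho_{k}(y)\rho_{\ell}(y)\,\theta(\delta)\theta(y)(1+y)(y^2-1)^{(\alpha-1)/2}(\delta^2-y^2)^{(\gamma+\beta+2)/2}/(y+\delta)$, the exact tilde-analogue of the intermediate expression in the previous proof.

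Finally I would split according to the parity of $k=\ell$. For $k$ even one has $\rho_{k}(y)^2=(\delta^2-y^2)^{k}$ and third parameter $(-1)^{k}\delta=\delta$; for $k$ odd, $\rho_{k}(y)^2=(\delta^2-y^2)^{k-1}(y+\delta)^2$ and third parameter $-\delta$. In both cases the identity $(\delta^2-y^2)/(y+\delta)=\delta-y$ reassembles the outer integrand into $\widetilde{h}_{k}(\gamma,\beta)\,\delta_{k\ell}\,J_{n-k}(y)J_{m-\ell}(y)\,\widetilde{\omega}(y;\alpha,2k+\gamma+\beta+1,(-1)^{k}\delta)$, whereupon \eqref{Ortho-Tilde} applied in the variable $y$ yields $\widetilde{h}_{n-k}(\alpha,2k+\gamma+\beta+1)\,\delta_{nm}$ and the prefactor $\theta((-1)^{k}\delta)(\delta^2-1)^{(2k+\alpha+\beta+\gamma+3)/2}/(1+(-1)^{k}\delta)$. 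Writing $\theta((-1)^{k}\delta)=(-1)^{k}\theta(\delta)$ and noting that $\widetilde{D}_{y}$ is meant to read $[-|\delta|,-1]\cup[1,|\delta|]=\widetilde{\mathcal{C}}$ (with $c=\delta$), the even and odd cases combine into the single expression for $\widetilde{H}_{nk}$. The only genuinely delicate part of the argument is the consistent tracking of the sign functions $\theta$ and of the Jacobian orientation in the second step; everything else is a mechanical transcription of the $|\delta|<1$ computation.
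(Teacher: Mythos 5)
Your proposal is correct and is exactly the paper's approach: the paper's own proof is the one-line remark that the argument of Proposition 2.1 goes through verbatim with \eqref{Ortho-1}, \eqref{Weight-1}, \eqref{hn} replaced by \eqref{Ortho-Tilde}, \eqref{Weight-Tilde}, \eqref{Hn-Tilde}, which is precisely the computation you carry out, including the correct sign bookkeeping $\theta((-1)^{k}\delta)=(-1)^{k}\theta(\delta)$ in the parity split. Your observation that $\widetilde{D}_{y}$ should read $[-|\delta|,-1]\cup[1,|\delta|]$ is also right; the stated $[-|\delta|,1]$ is a typo in the paper.
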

\begin{proof}
Similar to proposition 2.1 using instead \eqref{Ortho-Tilde}, \eqref{Weight-Tilde} and \eqref{Hn-Tilde}.
\end{proof}
It can again be  seen that the weight function \eqref{Weight-2Var-2} is positive on the domain \eqref{Domain-2} provided that $\alpha,\beta,\gamma>-1$ and $|\delta|>1$. The orthogonality region defined by \eqref{Domain-2} again corresponds to the disjoint union of four triangular domains, as illustrated by Figure 2 for the case $|\delta|=3$.
\begin{center}
\scalebox{0.3}{\includegraphics{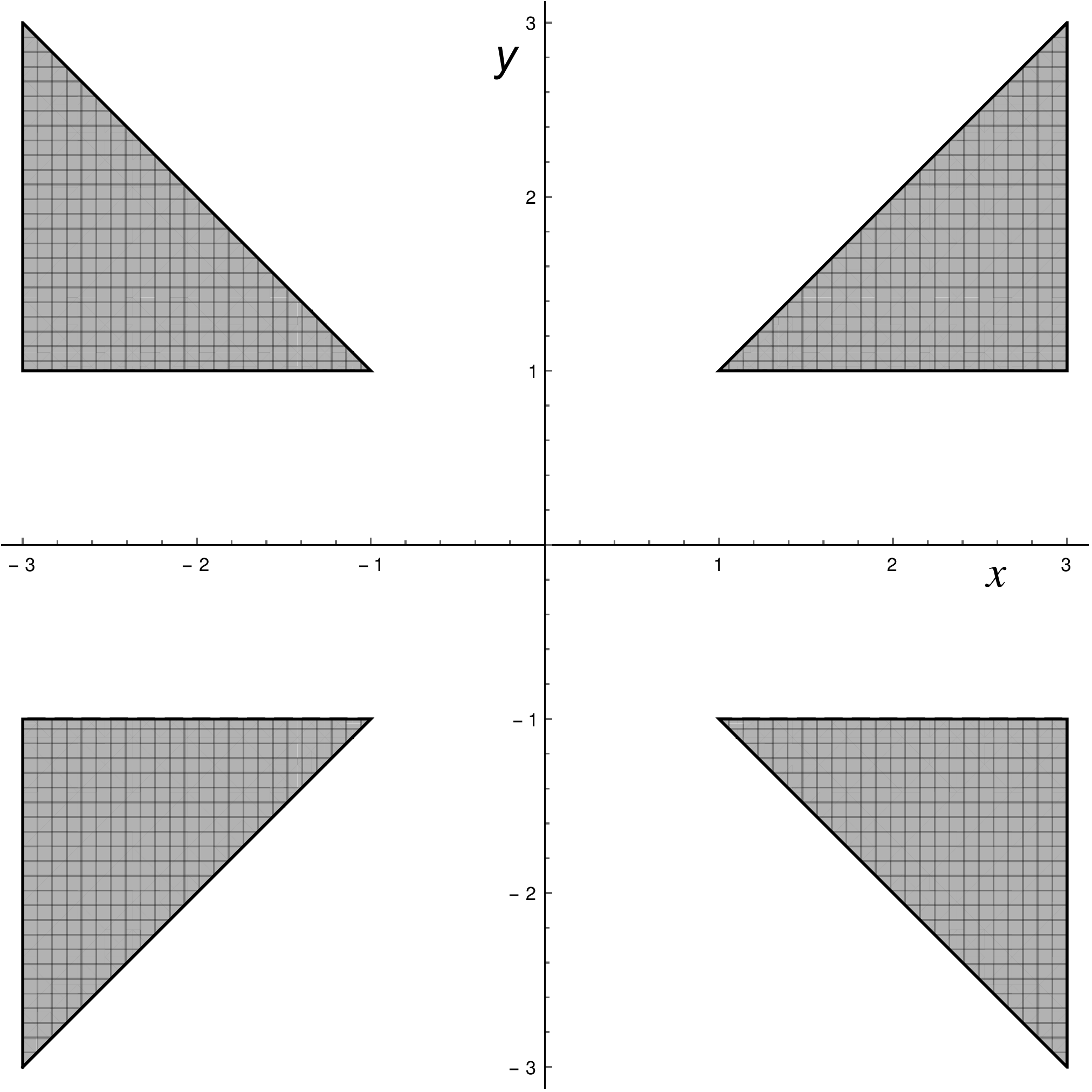}}
\\
\footnotesize
Figure 2: Orthogonality region for $|\delta|=3$
\end{center}

\subsection{A special case: the bivariate Little $-1$ Jacobi polynomials}
When $c=0$, the Big $-1$ Jacobi polynomials $J_{n}(x;a,b,c)$ defined by \eqref{Jacobi} reduce to the so-called Little $-1$ Jacobi polynomials $j_{n}(x;a,b)$ introduced in \cite{Vinet-2011-01}. These polynomials have the hypergeometric representation
\begin{align}
j_{n}(x;a,b)=
\begin{cases}
\pfq{2}{1}{-\frac{n}{2},\frac{n+a+b+2}{2}}{\frac{a+1}{2}}{1-x^2}+\frac{n(1-x)}{(a+1)}\; \pfq{2}{1}{1-\frac{n}{2}, \frac{n+a+b+2}{2}}{\frac{a+3}{2}}{1-x^2}, & \text{$n$ even},
\\[3mm]
\pfq{2}{1}{-\frac{n-1}{2}, \frac{n+a+b+1}{2}}{\frac{a+1}{2}}{1-x^2}-\frac{(n+a+b+1)(1-x)}{(a+1)} \; \pfq{2}{1}{-\frac{n-1}{2}, \frac{n+a+b+3}{2}}{\frac{a+3}{2}}{1-x^2},& \text{$n$ odd}.
\end{cases}
\end{align}
Taking $\delta=0$ in \eqref{Poly} leads to the following definition for the two-variable Little $-1$ Jacobi polynomials:
\begin{align}
\label{2-Var-Little}
q_{n,k}(x,y)=j_{n-k}(y;\alpha,2k+\beta+\gamma+1)\;y^{k}\;j_{k}\left(\frac{x}{y};\gamma,\beta\right),\quad k=0,1,2\ldots,\quad n=k,k+1,\ldots.
\end{align}
It is seen from \eqref{2-Var-Little} that the two-variable Little $-1$ Jacobi polynomials have the structure corresponding to one of the methods to construct bivariate orthogonal polynomials systems proposed by Koornwinder in \cite{Koornwinder-1975}. For the polynomials \eqref{2-Var-Little}, the weight function, which can be obtained by taking $\delta=0$ in either \eqref{Weight-2Var} or \eqref{Weight-2Var-2}, can also be recovered using the general scheme given in \cite{Koornwinder-1975}. For $\delta=0$, the region \eqref{Domain} reduces to two vertically opposite triangles.
\section{Bispectrality of the bivariate Big $-1$ Jacobi polynomials}
In this section, the two-variable Big $-1$ Jacobi polynomials $\mathcal{J}_{n,k}(x,y)$ are shown be the joint eigenfunctions of a pair of first-order differential operators involving reflections. Their recurrence relations are also derived. The results are obtained through a limiting process from the corresponding properties of the two-variable Big $q$-Jacobi polynomials introduced by Lewanowicz and Wo\'zny \cite{Lewanowicz_Wozny_2010-01}.
\subsection{Bivariate Big $q$-Jacobi polynomials}
Let us review some of the properties of the bivariate $q$-polynomials introduced in \cite{Lewanowicz_Wozny_2010-01}. The two-variable Big $q$-Jacobi polynomials, denoted $\mathcal{P}_{n,k}(x,y;a,b,c,d;q)$ are defined as
\begin{align}
\label{Biv-Q-Jacobi}
\mathcal{P}_{n,k}(x,y;a,b,c,d;q)=P_{n-k}(y;a,bc q^{2k+1}, dq^{k};q)\;y^{k}\;\left(\frac{d q}{y}; q\right)_{k}\;P_{k}\left(\frac{x}{y}; c, b,\frac{d}{y};q\right),
\end{align}
where $(a;q)_{n}$ stands for the $q$-Pochhammer symbol \cite{Gasper-2004} and where $P_{n}(x;a,b,c;q)$ are the Big $q$-Jacobi polynomials \cite{Koekoek-2010}. The two-variable Big $q$-Jacobi polynomials satisfy the eigenvalue equation \cite{Lewanowicz_Wozny_2010-01}
\begin{align}
\label{Eigen-q}
\Omega\,\mathcal{P}_{n,k}(x,y)=\left[\frac{q^{1-n}(q^{n}-1)(abcq^{n+2}-1)}{(q-1)^2}\right]\,\mathcal{P}_{n,k}(x,y),
\end{align}
where $\Omega$ is the $q$-difference operator 
\begin{multline*}
\Omega=(x-dq)(x-acq^2)\mathbf{D}_{q,x}\mathbf{D}_{q^{-1},x}+(y-aq)(y-dq)\mathbf{D}_{q,y}\mathbf{D}_{q^{-1},y}
\\
+q^{-1}(x-dq)(y-aq)\mathbf{D}_{q^{-1},x}\mathbf{D}_{q^{-1},y}+acq^{3}(bx-d)(y-1)\mathbf{D}_{q,x}\mathbf{D}_{q,y}
\\
+\frac{(abcq^{3}-1)(x-1)-(acq^{2}-1)(dq-1)}{q-1}\mathbf{D}_{q,x}+
\frac{(abcq^{3}-1)(y-1)-(aq-1)(dq-1)}{q-1}\mathbf{D}_{q,y},
\end{multline*}
and where $\mathbf{D}_{q,x}$ stands for the $q$-derivative
\begin{align*}
\mathbf{D}_{q,x}f(x,y)=\frac{f(qx,y)-f(x,y)}{x(q-1)}.
\end{align*}
The bivariate Big $q$-Jacobi polynomials also satisfy the pair of recurrence relations \cite{Lewanowicz_Wozny_2010-01}
\begin{align*}
y\,\mathcal{P}_{n,k}(x,y)=a_{nk}\,\mathcal{P}_{n+1,k}(x,y)+b_{nk}\,\mathcal{P}_{n,k}(x,y)+c_{nk}\,\mathcal{P}_{n-1,k}(x,y),
\end{align*}
\vspace{-2.5\baselineskip}
\begin{multline}
\label{Recurrence-q}
x\,\mathcal{P}_{n,k}(x,y)=e_{nk}\,\mathcal{P}_{n+1,k-1}(x,y)+f_{nk}\,\mathcal{P}_{n+1,k}(x,y)+g_{nk}\,\mathcal{P}_{n+1,k+1}(x,y)
\\ 
+r_{nk}\,\mathcal{P}_{n,k-1}(x,y)+s_{nk}\,\mathcal{P}_{n,k}(x,y)+t_{nk}\,\mathcal{P}_{n,k+1}(x,y)
\\
+u_{nk}\,\mathcal{P}_{n-1,k-1}(x,y)+v_{nk}\,\mathcal{P}_{n-1,k}(x,y)+w_{nk}\,\mathcal{P}_{n-1,k+1}(x,y),
\end{multline}
where the recurrence coefficients read
\begin{alignat*}{2}
a_{nk}&=\frac{(1-a q^{n-k+1})(1-abcq^{n+k+2})(1-d q^{n+1})}{(a b c q^{2n+2};q)_{2}},\quad &  b_{nk}&=1-a_{nk}-c_{nk},
\\
w_{nk}&=\frac{abc\sigma_k q^{n+2k+3}(abcq^{n+1}-d)(q^{n-k-1};q)_2}{(1-dq^{k+1})(a b c q^{2n+1};q)_2},\quad & f_{nk}&= a_{nk}(b c q^{k}\tau_{k}-\sigma_k+1),
\\
e_{nk}&=\frac{\tau_k bcq^{k}(dq^{k}-1)(1-dq^{n+1})(aq^{n-k+1};q)_2}{(abcq^{2n+2};q)_{2}},\quad & v_{nk}&=c_{nk}(bc q^{k}\tau_k-\sigma_k+1),
\\
g_{nk}&=\frac{\sigma_k (1-dq^{n+1})(a b c q^{n+k+2};q)_{2}}{(1-d q^{k+1})(abcq^{2n+2};q)_{2}},\quad &s_{nk}&=b_{nk}(bcq^{k}\tau_k-\sigma_k+1)+d(q^{k+1}\sigma_k-\tau_k),
\end{alignat*}
with
\begin{alignat*}{2}
t_{nk}&=\frac{q^{k+1}\sigma_{k}z_{n}(1-q^{n-k})(1-abcq^{n+k+2})}{1-d q^{k+1}},
\\
r_{nk}&=\tau_{k}z_{n}(dq^{k}-1)(1-aq^{n-k+1})(1-bcq^{n+k+1}),
\\
u_{nk}&=\frac{\tau_{k}a q^{n-k+1}(dq^{k}-1)(abcq^{n+1}-d)(bcq^{n+k};q)_{2}}{(abcq^{2n+1};q)_{2}},
\\
c_{nk}&=\frac{a d q^{n+1}(q^{n-k}-1)(1-b c q^{n+k+1})(1- a b c d^{-1} q^{n+1})}{(abcq^{2n+1};q)_{2}},
\end{alignat*}
and where $\sigma_k$, $\tau_k$ and $z_{n}$ are given by
\begin{align*}
\sigma_{k}&=\frac{(1-c q^{k+1})(1- b c q^{k+1})}{(bcq^{2k+1};q)_{2}},\quad 
\tau_{k}=-\frac{cq^{k+1}(1-q^{k})(1-bq^{k})}{(bcq^{2k};q)_{2}},
\\
z_{n}&=\frac{a b c q^{n+1}(1+q-dq^{n+1})-d}{(1-a b c q^{2n+1})(1-a b c q^{2n+3})}.
\end{align*}
\subsection{$\mathcal{J}_{n,k}(x,y)$ as a $q\rightarrow -1$ limit of $\mathcal{P}_{n,k}(x,y)$}
The two-variable Big $-1$ Jacobi polynomials $\mathcal{J}_{n,k}(x,y)$ can be obtained from the bivariate Big $q$-Jacobi by taking $q\rightarrow -1$. Indeed, a direct calculation using the expression \eqref{Biv-Q-Jacobi} shows that
\begin{align}
\label{Limit}
\lim_{\epsilon\rightarrow 0}\;\mathcal{P}_{n,k}(x,y;-e^{\epsilon \alpha},-e^{\epsilon \beta},-e^{\epsilon \gamma},\delta, -e^{\epsilon})=
\mathcal{I}_{n,k}(x,y; \alpha,\beta,\gamma,\delta),
\end{align}
where we have used the notation $\mathcal{I}_{n,k}(x,y; \alpha,\beta,\gamma,\delta)$ to exhibit the parameters appearing in the Big $-1$ Jacobi polynomials defined in \eqref{Poly}. A similar limit was considered in \cite{Vinet_Zhedanov_2012-07} to obtain the univariate Big $-1$ Jacobi polynomials in terms of the Big $q$-Jacobi polynomials.
\subsection{Eigenvalue equation for the Big $-1$ Jacobi polynomials}
The eigenvalue equation \eqref{Eigen-q} for the Big $q$-Jacobi polynomials and the relation \eqref{Limit} between the Big $q$-Jacobi polynomials and the Big $-1$ Jacobi polynomials can be used to obtain an eigenvalue equation for the latter.
\begin{proposition}
Let $L_{1}$ be the first-order differential/difference operator
\begin{multline}
\label{L1-Operator}
L_{1}=G_{5}(x,y)\,R_{x}R_{y}\partial_{y}+G_{6}(x,y)\,R_{y}\partial_{y}+G_{7}(x,y)\,R_{x}R_{y}\partial_{x}+G_{8}(x,y)\,R_{x}\partial_{x}
\\
+G_{1}(x,y)\,R_{x}R_{y}+G_2(x,y)\,R_{x}+G_{3}(x,y)\,R_{y}-(G_1(x,y)+G_2(x,y)+G_3(x,y))\,\mathbb{I},
\end{multline}
where $R_{x}, R_{y}$ are reflection operators and where the coefficients read
\begin{subequations}
\label{Coef}
\begin{alignat}{2}
G_{1}(x,y)&=\frac{x[1+\beta+\gamma-y(\alpha+\beta+\gamma+2)]-\delta[y(\alpha+\gamma+1)-\gamma]}{4xy},
\quad  &
G_{8}(x,y)&=\frac{(\delta+x)(x-y)}{2 x y},
\\
G_{2}(x,y)&=-\frac{x[x(\beta+\gamma+1)-\beta y]+\delta(y+\gamma x)}{4 x^2 y},
\quad &
G_{7}(x,y)&=\frac{(\delta+x)(y-1)}{2y},
\\
G_3(x,y)&=-\delta\left( \frac{x+\alpha xy-y[y(\alpha+\gamma+1)-\gamma]}{4x y^2}\right)
\quad &
G_{6}(x,y)&=\frac{\delta(x-y)(y-1)}{2 xy},
\\
G_{5}(x,y)&=\frac{(\delta+x)(y-1)}{2x}.
\end{alignat}
\end{subequations}
The Big $-1$ Jacobi polynomials satisfy the eigenvalue equation
\begin{align*}
L_{1}\,\mathcal{P}_{n,k}(x,y)=\mu_n \,\mathcal{P}_{n,k}(x,y),\qquad 
\mu_{n}=
\begin{cases}
-\frac{n}{2}, & \text{$n$ even},
\\
\frac{n+\alpha+\beta+\gamma+2}{2}, & \text{$n$ odd}.
\end{cases}
\end{align*}
Furthermore, let $L_2$ be the differential/difference operator
\begin{align*}
L_2=\frac{2(y-x)(x+\delta)}{x}R_{x}\partial_{x}+\frac{(\gamma+\beta+1)x^2+(\delta\gamma-\beta y)x+\delta y}{x^2}(R_{x}-\mathbb{I}),
\end{align*}
The Big $-1$ Jacobi polynomials satisfy the eigenvalue equation
\begin{align*}
L_2\,\mathcal{P}_{n,k}(x,y)=\nu_k \,\mathcal{P}_{n,k}(x,y),\qquad 
\nu_k=
\begin{cases}
2k,& \text{$k$ even},
\\
-2(k+\beta+\gamma+1), & \text{$k$ odd},
\end{cases}.
\end{align*}
\end{proposition}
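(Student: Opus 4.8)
The plan is to treat the two eigenvalue equations by different routes. For $L_1$ I would obtain the operator and its spectrum as the $q\to-1$ limit of the known $q$-difference equation \eqref{Eigen-q}, exploiting the limit \eqref{Limit} that identifies $\mathcal P_{n,k}$ with $\mathcal J_{n,k}$. For $L_2$, which involves only the variable $x$ and the reflection $R_x$, I would bypass the $q$-deformation entirely and reduce directly to the univariate eigenvalue equation \eqref{Eigen-Uni}, using the product structure \eqref{Poly} of $\mathcal J_{n,k}$.

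For $L_1$, first substitute the parametrization of \eqref{Limit}, namely $a=-e^{\epsilon\alpha}$, $b=-e^{\epsilon\beta}$, $c=-e^{\epsilon\gamma}$, $d=\delta$, $q=-e^{\epsilon}$, into both $\Omega$ and the eigenvalue $\lambda_n=q^{1-n}(q^n-1)(abcq^{n+2}-1)/(q-1)^2$. A short computation shows that $\lambda_n$ vanishes at $q=-1$ for every $n$ (the factor $q^n-1$ vanishes when $n$ is even and $abcq^{n+2}-1$ vanishes when $n$ is odd), so the correct rescaling is by $q+1=1-e^{\epsilon}=-\epsilon+O(\epsilon^2)$. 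Dividing the identity $\Omega\,\mathcal P_{n,k}=\lambda_n\,\mathcal P_{n,k}$ by $q+1$ and letting $\epsilon\to0$, the right-hand side tends to $\mu_n\,\mathcal J_{n,k}$, where $\mu_n=\lim_{q\to-1}\lambda_n/(q+1)$ reproduces the stated piecewise values (one finds $\lambda_n/(q+1)\to-n/2$ for $n$ even and $\to(n+\alpha+\beta+\gamma+2)/2$ for $n$ odd). It remains to show that the left-hand side converges to $L_1\,\mathcal J_{n,k}$, i.e.\ that $\lim_{q\to-1}\Omega/(q+1)=L_1$ as operators on polynomials. The required building blocks are the expansions of the $q$-shift $f(qx)=R_xf-\epsilon\,x\,R_x\partial_xf+O(\epsilon^2)$ and of the $q$-derivative $\mathbf D_{q,x}=\tfrac{1}{2x}(\mathbb I-R_x)+O(\epsilon)$ (and similarly in $y$), which turn each second-order $q$-difference summand of $\Omega$ into a reflection operator at order $\epsilon^0$ and a first-order differential/difference operator at order $\epsilon^1$. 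Since $\lambda_n$ vanishes at $q=-1$, the order-$\epsilon^0$ part $\Omega_0$ annihilates every $\mathcal J_{n,k}$; as these form a basis of the polynomials, $\Omega_0=0$ there, which is precisely what guarantees that $\Omega/(q+1)$ has a finite limit and that the spurious term $\Omega_0\,\mathcal P^{(1)}_{n,k}$ drops out at order $\epsilon$.

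The heart of the argument, and the step I expect to be the main obstacle, is the explicit order-$\epsilon$ expansion of $\Omega/(q+1)$: one must expand all six $q$-difference summands of $\Omega$ (including the mixed terms $\mathbf D_{q^{-1},x}\mathbf D_{q^{-1},y}$ and $\mathbf D_{q,x}\mathbf D_{q,y}$ and the two first-order $q$-derivative pieces), expand the rational prefactors such as $(x-dq)(x-acq^2)$ and $acq^3(bx-d)(y-1)$ to first order in $\epsilon$, and collect the surviving terms to verify that they assemble exactly into the combination \eqref{L1-Operator} with the coefficients $G_1,\dots,G_8$ of \eqref{Coef}. This is lengthy but mechanical; the only genuine subtleties are bookkeeping the reflections $R_x,R_y$ produced by $f(qx)\to f(-x)$ and checking that the $\mathbb I$-coefficient is indeed $-(G_1+G_2+G_3)$, which is the statement that $L_1$ annihilates constants (consistent with $\mu_0=0$).

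The equation for $L_2$ is considerably easier. Because $L_2$ acts only through $\partial_x$ and $R_x$, it leaves the $y$-dependent factor $J_{n-k}(y;\dots)\rho_k(y)$ of \eqref{Poly} untouched, so it suffices to compute its action on $J_k(x/y;\gamma,\beta,\delta/y)$. Passing to the variable $u=x/y$ with $y$ held fixed, so that $R_x=R_u$ and $\partial_x=y^{-1}\partial_u$, a direct comparison shows that $L_2$ coincides with $2\mathcal L-(\gamma+\beta+1)\,\mathbb I$, where $\mathcal L$ is the univariate Big $-1$ Jacobi operator of \eqref{Eigen-Uni} in the variable $u$ with parameters $(a,b,c)=(\gamma,\beta,\delta/y)$; the two $u$-dependent coefficients match after using $\partial_uR_u=-R_u\partial_u$, and the residual $-(\gamma+\beta+1)\mathbb I$ accounts for the difference between the two $R_u$-terms. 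Applying \eqref{Eigen-Uni}, whose eigenvalue $(-1)^k(k+(\gamma+\beta+1)/2)$ is independent of $c$ (hence of $y$), gives $L_2\,J_k=\bigl[2(-1)^k(k+(\gamma+\beta+1)/2)-(\gamma+\beta+1)\bigr]J_k$, which equals $2k$ for $k$ even and $-2(k+\beta+\gamma+1)$ for $k$ odd. This is exactly $\nu_k$, and since the $y$-factor is unaffected one concludes $L_2\,\mathcal J_{n,k}=\nu_k\,\mathcal J_{n,k}$.
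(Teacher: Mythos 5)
Your proposal is correct and takes essentially the same route as the paper: the $L_1$ equation is obtained by dividing the $q$-difference eigenvalue equation \eqref{Eigen-q} by $1+q$ and taking the $q\to-1$ limit via \eqref{Limit}, and the $L_2$ equation is obtained by combining the product structure \eqref{Poly} with the univariate eigenvalue equation \eqref{Eigen-Uni}. Your added details --- the vanishing of the order-$\epsilon^{0}$ part of $\Omega$ that justifies the division by $1+q$, and the operator identity $L_2=2\mathcal{L}-(\gamma+\beta+1)\,\mathbb{I}$ in the variable $u=x/y$ --- correctly fill in steps the paper leaves implicit.
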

\begin{proof}
The eigenvalue equation with respect to $L_1$ is obtained by dividing both sides of \eqref{Eigen-q} by $(1+q)$ and taking the $q\rightarrow -1$ limit according to \eqref{Limit}. The eigenvalue equation with respect to $L_2$ is obtained by combining \eqref{Poly}, \eqref{Jacobi} and \eqref{Eigen-Uni}.
\end{proof}
The two-variable Big $-1$ Jacobi polynomials are thus the joint eigenfunctions of the first order differential operators with reflections $L_1$ and $L_2$. It is directly verified that these operators  commute with one another, as should be. 

The $q\rightarrow -1$ limit \eqref{Limit} of the recurrence relations \eqref{Recurrence-q} can also be taken to obtain the recurrence relations satisfied by the Big $-1$ Jacobi polynomials. The result is as follows.
\begin{proposition}
The Big $-1$ Jacobi polynomials satisfy the recurrence relations
\begin{align*}
y\,\mathcal{J}_{n,k}(x,y)=\widetilde{a}_{nk}\,\mathcal{J}_{n+1,k}(x,y)+\widetilde{b}_{nk}\,\mathcal{J}_{n,k}(x,y)+\widetilde{c}_{nk}\,\mathcal{J}_{n-1,k}(x,y),
\end{align*}
\vspace{-2.5\baselineskip}
\begin{multline*}
x\,\mathcal{J}_{n,k}(x,y)=\widetilde{e}_{nk}\,\mathcal{J}_{n+1,k-1}(x,y)+\widetilde{f}_{nk}\,\mathcal{J}_{n+1,k}(x,y)+\widetilde{g}_{nk}\,\mathcal{J}_{n+1,k+1}(x,y)
\\ 
+\widetilde{r}_{nk}\,\mathcal{J}_{n,k-1}(x,y)+\widetilde{s}_{nk}\,\mathcal{J}_{n,k}(x,y)+\widetilde{t}_{nk}\,\mathcal{J}_{n,k+1}(x,y)
\\
+\widetilde{u}_{nk}\,\mathcal{J}_{n-1,k-1}(x,y)+\widetilde{v}_{nk}\,\mathcal{J}_{n-1,k}(x,y)+\widetilde{w}_{nk}\,\mathcal{J}_{n-1,k+1}(x,y).
\end{multline*}
With 
\begin{align*}
\widetilde{\tau}_k=\frac{k+\beta \phi_k}{2k+\beta+\gamma},\quad \widetilde{\sigma}_{k}=\frac{k+\beta\phi_k+\gamma+1}{2k+\beta+\gamma+2},\quad \widetilde{z}_{n}=\frac{(-1)^{n}-\delta(2n+\alpha+\beta+\gamma+2)}{(2n+\alpha+\beta+\gamma+1)(2n+\alpha+\beta+\gamma+3)},
\end{align*}
where $\phi_{k}=(1-(-1)^{k})/2$ is the characteristic function for odd numbers, the recurrence coefficients read
\begin{align*}
\widetilde{a}_{n,k}&=\frac{1+\delta_{n}}{2n+\alpha+\beta+\gamma+3}
\times
\begin{cases}
n-k+\alpha+1, & \text{$n+k$ even},
\\
n+k+\alpha+\beta+\gamma+2, & \text{$n+k$ odd},
\end{cases}
\\
\widetilde{c}_{n,k}&=\frac{1+\delta_{n+1}}{2n+\alpha+\beta+\gamma+1}
\times 
\begin{cases}
n-k, & \text{$n+k$ even},
\\
n+k+\beta+\gamma+1, &\text{$n+k$ odd},
\end{cases}
\\
\widetilde{e}_{n,k}&=\frac{\widetilde{\tau}_k(1-\delta_{k})(1+\delta_{n})}{2n+\alpha+\beta+\gamma+3}
\times
\begin{cases}
n-k+\alpha+1, & \text{$n+k$ even},
\\
n-k+\alpha+2, & \text{$n+k$ odd},
\end{cases}
\\
\widetilde{g}_{n,k}&=\frac{\widetilde{\sigma}_k(1+\delta_n)}{(1+\delta_k)(2n+\alpha+\beta+\gamma+3)}
\times
\begin{cases}
n+k+\alpha+\beta+\gamma+3, & \text{$n+k$ even},
\\
n+k+\alpha+\beta+\gamma+2, & \text{$n+k$ odd},
\end{cases}
\\
\widetilde{r}_{n,k}&=2\widetilde{\tau}_{k}\widetilde{z}_n((-1)^{k}-\delta)
\times
\begin{cases}
n-k+\alpha +1 & \text{$n+k$ even}
\\
n+k+\beta+\gamma+1 & \text{$n+k$ odd}
\end{cases}
\end{align*}
\begin{align*}
\widetilde{t}_{n,k}&=\frac{2(-1)^{k+1}\widetilde{\sigma}_k\widetilde{z}_n}{1+\delta_k}
\times 
\begin{cases}
n-k, & \text{$n+k$ even},
\\
n+k+\alpha+\beta+\gamma+2, & \text{$n+k$ odd},
\end{cases}
\\
\widetilde{u}_{nk}&=\frac{\widetilde{\tau}_k(1-\delta_k)(1-\delta_n)}{(2n+\alpha+\beta+\gamma+1)}
\times 
\begin{cases}
n+k+\beta+\gamma, & \text{$n+k$ even},
\\
n+k+\beta+\gamma+1, & \text{$n+k$ odd},
\end{cases}
\\
\widetilde{w}_{n,k}&=\frac{\widetilde{\sigma}_k(1-\delta_n)}{(1+\delta_k)(2n+\alpha+\beta+\gamma+1)}
\times 
\begin{cases}
n-k, & \text{$n+k$ even},
\\
n-k-1, & \text{$n+k$ odd},
\end{cases}
\end{align*}

with $\delta_{n}=(-1)^{n}\delta$ and
\begin{align*}
&\widetilde{b}_{n,k}=1-\widetilde{a}_{n,k}-\widetilde{c}_{n,k},\quad \widetilde{f}_{n,k}=\widetilde{a}_{n,k}(1-\widetilde{\sigma}_k-\widetilde{\tau}_{n,k}),\quad \widetilde{s}_{n,k}=\widetilde{b}_{n,k}(1-\widetilde{\sigma}_{k}-\widetilde{\tau}_{k})-\delta_{k} (\widetilde{\sigma}_{k}-\widetilde{\tau}_{k})
\\
& \widetilde{v}_{n,k}=\widetilde{c}_{n,k}(1-\widetilde{\sigma}_k-\widetilde{\tau}_{k})
\end{align*}
\end{proposition}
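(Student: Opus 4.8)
The plan is to obtain both recurrence relations as the $q\to -1$ limit of the corresponding relations for the bivariate Big $q$-Jacobi polynomials, exactly as the eigenvalue equations of the previous proposition were derived, using the limit \eqref{Limit}. The three-term relation in $y$ is the easier half and does not require the $q$-polynomials at all: since $\mathcal{J}_{n,k}(x,y)$ depends on $n$ only through the factor $J_{n-k}(y;\alpha,2k+\beta+\gamma+1,(-1)^{k}\delta)$ in \eqref{Poly}, multiplying by $y$ and applying the univariate three-term recurrence of Section 1.1 with $a=\alpha$, $b=2k+\beta+\gamma+1$ and $c=(-1)^{k}\delta$ immediately gives $\widetilde{a}_{nk}=A_{n-k}$, $\widetilde{c}_{nk}=C_{n-k}$ and $\widetilde{b}_{nk}=1-\widetilde{a}_{nk}-\widetilde{c}_{nk}$, the factors $\rho_{k}(y)$ and $J_{k}(x/y;\gamma,\beta,\delta/y)$ being inert under this operation. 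The stated even/odd split then follows from $(-1)^{n-k}=(-1)^{n+k}$ together with the observation that the coefficient $c+1=1+(-1)^{k}\delta$ (arising when $n-k$ is even) and the coefficient $1-c=1-(-1)^{k}\delta$ (arising when $n-k$ is odd) both equal $1+\delta_{n}$.

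For the nine-term relation in $x$ I would substitute $a=-e^{\epsilon\alpha}$, $b=-e^{\epsilon\beta}$, $c=-e^{\epsilon\gamma}$, $d=\delta$ and $q=-e^{\epsilon}$ into \eqref{Recurrence-q} and let $\epsilon\to 0$. Since $x$ and $y$ are held fixed and each polynomial $\mathcal{P}_{n',k'}(x,y)$ converges to $\mathcal{J}_{n',k'}(x,y)$ by \eqref{Limit}, it suffices to compute the limit of each of the nine coefficients $e_{nk},f_{nk},\dots,w_{nk}$ together with the auxiliary quantities $\sigma_{k}$, $\tau_{k}$, $z_{n}$. Every computation rests on the elementary expansion $q^{m}=(-1)^{m}e^{m\epsilon}$, so that $q^{m}\to(-1)^{m}$ and, more precisely, a factor such as $1-q^{m}$ or $1-abcq^{m}$ tends to $2$ when the relevant $q$-power approaches $-1$ but vanishes linearly in $\epsilon$ when it approaches $+1$. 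Almost all coefficients are genuine $0/0$ limits in which numerator and denominator vanish to the same order, and it is the above dichotomy, governed by the parities of $n$, $k$ and $n+k$, that produces the case distinctions; for instance $\widetilde{\sigma}_{k}$ and $\widetilde{\tau}_{k}$ emerge as finite ratios of linearly vanishing factors, the surviving numerator being selected by the parity of $k$ through $\phi_{k}$.

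The main obstacle is that the limit is not uniform term by term: the quantity $z_{n}$ genuinely diverges. Its numerator is $O(\epsilon)$ while its denominator $(1-abcq^{2n+1})(1-abcq^{2n+3})$ is $O(\epsilon^{2})$, so that $z_{n}\sim-\widetilde{z}_{n}/\epsilon$. The pole is harmless only because $z_{n}$ never occurs alone: in $t_{nk}$ it multiplies $(1-q^{n-k})(1-abcq^{n+k+2})$ and in $r_{nk}$ it multiplies $(1-aq^{n-k+1})(1-bcq^{n+k+1})$, and in each parity class exactly one factor of the pair — the one whose $q$-power tends to $+1$ — vanishes like $\epsilon$ and cancels the pole, leaving the finite products $\widetilde{t}_{nk}$ and $\widetilde{r}_{nk}$. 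Tracking which factor supplies the compensating zero, namely $(1-q^{n-k})\sim-\epsilon(n-k)$ when $n+k$ is even versus $(1-abcq^{n+k+2})\sim-\epsilon(n+k+\alpha+\beta+\gamma+2)$ when $n+k$ is odd, is precisely what fixes the two branches of $\widetilde{t}_{nk}$ and, analogously, of $\widetilde{r}_{nk}$. The careful part of the proof is therefore the bookkeeping of these $\epsilon$-expansions to consistent order, making sure each $1/\epsilon$ is paired with its cancelling zero before the limit is taken.

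Finally, the coefficients $\widetilde{b}_{nk}$, $\widetilde{f}_{nk}$, $\widetilde{s}_{nk}$ and $\widetilde{v}_{nk}$ follow by the same limit applied to the structural $q$-identities $b_{nk}=1-a_{nk}-c_{nk}$, $f_{nk}=a_{nk}(bcq^{k}\tau_{k}-\sigma_{k}+1)$, $s_{nk}=b_{nk}(bcq^{k}\tau_{k}-\sigma_{k}+1)+d(q^{k+1}\sigma_{k}-\tau_{k})$ and $v_{nk}=c_{nk}(bcq^{k}\tau_{k}-\sigma_{k}+1)$, inserting the previously computed limits of $bcq^{k}$, $\sigma_{k}$ and $\tau_{k}$. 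As a consistency check I would confirm that setting $\delta=0$ recovers the recurrences for the bivariate Little $-1$ Jacobi polynomials \eqref{2-Var-Little}, and that the two relations are compatible with the eigenvalue equations of the preceding proposition.
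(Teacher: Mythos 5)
Your proposal is correct and, for the nine-term relation in $x$, it is exactly the paper's argument: the paper's entire proof is the single sentence that the result follows by applying the limit \eqref{Limit} to the recurrence relations \eqref{Recurrence-q}, and your $\epsilon$-expansion bookkeeping — in particular the observation that $z_{n}\sim-\widetilde{z}_{n}/\epsilon$ with the pole cancelled in $t_{nk}$ by whichever of $(1-q^{n-k})$, $(1-abcq^{n+k+2})$ has its $q$-power tending to $+1$, and in $r_{nk}$ by $(1-aq^{n-k+1})$ or $(1-bcq^{n+k+1})$ — is precisely the computation the paper leaves implicit. Where you genuinely depart from the paper is the three-term relation in $y$: the paper obtains it too as a $q\rightarrow-1$ limit, whereas you derive it directly from the univariate recurrence of Section 1.1, using that $n$ enters \eqref{Poly} only through the factor $J_{n-k}\left(y;\alpha,2k+\beta+\gamma+1,(-1)^{k}\delta\right)$; this is valid (one checks that $A_{n-k}$ and $C_{n-k}$ with $a=\alpha$, $b=2k+\beta+\gamma+1$, $c=(-1)^{k}\delta$ reproduce $\widetilde{a}_{nk}$ and $\widetilde{c}_{nk}$ after rewriting $1\pm(-1)^{k}\delta$ as $1+\delta_{n}$ or $1+\delta_{n+1}$ according to the parity of $n+k$), and it is arguably cleaner since it avoids the limit entirely for that half. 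One caution on the bookkeeping you rightly flag as the delicate part: the auxiliary quantities need not converge to their tilde counterparts individually. A direct expansion gives $\sigma_{k}\rightarrow\widetilde{\sigma}_{k}$ but $\tau_{k}\rightarrow(-1)^{k+1}\widetilde{\tau}_{k}$; the compensating sign comes from $bcq^{k}\rightarrow(-1)^{k}$ and from factors such as $(dq^{k}-1)\rightarrow-(1-\delta_{k})$, so that the full coefficients $e_{nk}$, $f_{nk}$, etc., do converge to the stated $\widetilde{e}_{nk}$, $\widetilde{f}_{nk}$. Your plan of inserting the limits of $bcq^{k}$, $\sigma_{k}$, $\tau_{k}$ into the structural identities handles this correctly, provided these signs are tracked rather than assuming $\tau_{k}\rightarrow\widetilde{\tau}_{k}$ outright.
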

\begin{proof}
The result is obtained by applying the limit \eqref{Limit} to the recurrence relations \eqref{Recurrence-q}.
\end{proof}
\section{A Pearson-type system for Big $-1$ Jacobi}
Let us now show how the weight function $W(x,y)$ for the polynomials $\mathcal{J}_{n,k}(x,y)$ can be recovered as the symmetry factor for the operator $L_1$ given in \eqref{L1-Operator}. The symmetrization condition for $L_1$ is
\begin{align}
\label{Sym-Cond}
(W(x,y)L_1)^{*}=W(x,y)L_1,
\end{align}
where $M^{*}$ denotes the Lagrange adjoint. For an operator of the form
\begin{align*}
M=\sum_{\mu,\nu,k,j}A_{k,j}(x,y)\;\partial_{x}^{k}\;\partial_{y}^{j}\;R_{x}^{\mu}\;R_{y}^{\nu},
\end{align*}
for $\mu,\nu\in \{0,1\}$ and $k,j=0,1,2,\ldots$, the Lagrange adjoint reads
\begin{align*}
M^{*}=\sum_{\mu,\nu, k, j}(-1)^{k+j}\;R_{x}^{\mu}R_{y}^{\nu}\;\partial_{y}^{j}\;\partial_{x}^{k}\;A_{k,j}(x,y),
\end{align*}
where we have assumed that $W(x,y)$ is defined on a symmetric region with respect to $R_x$ and $R_y$.  Imposing the condition \eqref{Sym-Cond}, one finds the following system of Pearson-type equations:
\begin{subequations}
\begin{align}
\label{A}
W(x,y)\,G_{8}(x,y)&=W(-x,y)\,G_{8}(-x,y),
\\
\label{B}
W(x,y)\,G_{7}(x,y)&=W(-x,-y)\,G_{7}(-x,-y),
\\
W(x,y)\,G_{6}(x,y)&= W(x,-y)\,G_{6}(x,-y),
\\
W(x,y)\,G_{5}(x,y)&=W(-x,-y)\,G_{5}(-x,-y)
\\
\label{D}
W(x,y)\,G_{3}(x,y)&=W(x,-y)\,G_{3}(x,-y)-\partial_{y}(W(x,-y)\,G_{6}(x,-y)),
\\
\label{C}
W(x,y)\,G_{2}(x,y)&=W(-x,y)\, G_2(-x,y)-\partial_{x}(W(-x,y)\,G_{8}(-x,y)),
\\
\nonumber
W(x,y)\,G_{1}(x,y)&=W(-x,-y)\,G_1(-x,-y)
\\
&\qquad -\partial_{x}(W(-x,-y)\,G_{7}(-x,-y))-\partial_{y}(W(-x,-y)\,G_5(-x,-y)),
\end{align}
\end{subequations}
where the functions $G_{i}(x,y)$, $i=1,\ldots,8$, are given by \eqref{Coef}. We assume that $W(x,y)>0$ and moreover that $|\delta|\leqslant |x|\leqslant y\leqslant 1$. Upon substituting \eqref{Coef} in \eqref{A}, one finds
\begin{align*}
(x+\delta)\,(x-y)\,W(x,y)=-(x-\delta)\,(x+y)\,W(-x,y),
\end{align*}
for which the general solution is of the form
\begin{align}
\label{Inter-1}
W(x,y)=\theta(x)\,(x-\delta)\,(x+y)\,f_1(x^2,y),
\end{align}
where $f_1$ is an arbitrary function. Using \eqref{Inter-1} in \eqref{B} yields
\begin{align*}
(y-1)\,f_1(x^2,y)=(y+1)\,f_1(x^2,-y),
\end{align*}
which has the general solution
\begin{align*}
f_1(x^2,y)=\theta(y)\,(y+1)\,f_2(x^2,y^2),
\end{align*}
where $f_2$ is an arbitrary function. The function $W(x,y)$ is thus of the general form
\begin{align}
\label{Dompe}
W(x,y)=\theta(x)\theta(y)\,(x-\delta)\,(x+y)\,(y+1)\,f_2(x^2,y^2).
\end{align}
Upon substituting the above expression in \eqref{C}, one finds after simplifications
\begin{align*}
\left[\frac{\beta-1}{x^2-\delta^2}+\frac{\gamma-1}{x^2-y^2}\right]\,x\,f_2(x^2,y^2)=\partial_{x}\,f_2(x^2,y^2).
\end{align*}
After separation of variable, the result is
\begin{align*}
f_2(x^2,y^2)=(x^2-\delta^2)^{\frac{\beta-1}{2}}(y^2-x^2)^{\frac{\gamma-1}{2}}f_3(y^2).
\end{align*}
Finally, upon substituting the above equation in \eqref{D} one finds
\begin{align*}
y(\alpha-1)f_3(y^2)=(y^2-1)\partial_{y}f_3(y^2),
\end{align*}
which gives $f_3=(1-y^2)^{\frac{\alpha-1}{2}}$ and thus
\begin{align*}
f_2(x^2,y^2)=(x^2-\delta^2)^{\frac{\beta-1}{2}}(y^2-x^2)^{\frac{\gamma-1}{2}}(1-y^2)^{\frac{\alpha-1}{2}}.
\end{align*}
Upon combining the above expression with \eqref{Dompe}, we find
\begin{align*}
W(x,y)=\theta(x\,y)|y|^{\beta+\gamma}(1+y)\left(1+\frac{x}{y}\right)\left(\frac{x-\delta}{y}\right)(1-y^2)^{\frac{\alpha-1}{2}}\left(1-\frac{x^2}{y^2}\right)^{\frac{\gamma-1}{2}}\left(\frac{x^2-\delta^2}{y^2}\right)^{\frac{\beta-1}{2}},
\end{align*}
which indeed corresponds to the weight function of proposition 2.1. The weight function $W(x,y)$ for the two-variable Big $-1$ Jacobi polynomials thus corresponds to the symmetry factor for $L_1$.
\section{Conclusion}
In this paper, we have introduced and characterized a new family of two-variable orthogonal polynomials that generalize of the Big $-1$ Jacobi polynomials. We have constructed their orthogonality measure and we have derived explicitly their bispectral properties. We have furthermore shown that the weight function for these two-variable polynomials can be recovered by symmetrization of the first-order differential operator with reflections that these polynomials diagonalize. The two-variable orthogonal polynomials introduced here are the first example of the multivariate generalization of the Bannai-Ito scheme. It would be of great interest to construct multivariate extensions of the other families of polynomials  of this scheme.
\section*{Acknowledgments}
VXG holds an Alexander-Graham-Bell fellowship from the Natural Science and Engineering Research Council of Canada (NSERC). JML holds a scholarship from NSERC. The research of LV is supported in part by NSERC. AZ would like to thank the Centre de recherches math\'ematiques for its hospitality.
\begin{appendices}
\section{Normalization coefficients for Big $-1$ Jacobi polynomials}
In this appendix, we present a derivation of the normalization coefficients \eqref{hn} and  \eqref{Hn-Tilde} appearing in the orthogonality relation of the univariate Big $-1$ Jacobi polynomials. The result is obtained by using their kernel partners, the Chihara polynomials. These polynomials, denoted by $C_{n}(x;\alpha,\beta,\gamma)$, have the expression \cite{Genest-2013-09-02}
\begin{align*}
&C_{2n}\left(x;\alpha,\beta,\gamma\right)= (-1)^n\frac{(\alpha+1)_n}{(n+\alpha+\beta+1)_n}\;\pfq{2}{1}{-n, n+\alpha+\beta+1}{\alpha+1}{x^2-\gamma^2} ,
\\
&C_{2n+1}\left(x;\alpha,\beta,\gamma\right)= (-1)^n\frac{(\alpha+2)_n}{(n+\alpha+\beta+2)_n}(x-\gamma)\;\pfq{2}{1}{-n, n+\alpha+\beta+2}{\alpha+2}{x^2-\gamma^2}.
\end{align*}
For $\alpha,\beta>-1$, they satisfy the orthogonality relation
\begin{align}
\label{Ortho-A}
\int_{\mathcal{E}} C_{n}\left(x;\alpha,\beta,\gamma\right)C_{m}\left(x;\alpha,\beta,\gamma\right)\theta(x)(x+\gamma)(x^2-\gamma^2)^\alpha(1+\gamma^2-x^2)^{\beta}\;\mathrm{d}x=\eta_n\delta_{nm},
\end{align}
on the interval $\mathcal{E}=[-\sqrt{1+\gamma^2},-|\gamma|]\cup[|\gamma|,\sqrt{1+\gamma^2}]$ and their normalization coefficients read
\begin{align*}
\eta_{2n}=\frac{\Gamma(n+\alpha+1)\Gamma(n+\beta+1)}{\Gamma(n+\alpha+\beta+1)}\frac{n!}{(2n+\alpha+\beta+1)[(n+\alpha+\beta+1)_n]^2},
\\
\eta_{2n+1}=\frac{\Gamma(n+\alpha+2)\Gamma(n+\beta+1)}{\Gamma(n+\alpha+\beta+2)}\frac{n!}{(2n+\alpha+\beta+2)[(n+\alpha+\beta+2)_n]^2}.
\end{align*}
Let $\widehat{J}_{n}(x)$ be the monic Big $-1$ Jacobi polynomials:
\begin{align*}
\widehat{J}_n(x)= \kappa_n J_n(x) = x^n+\mathcal{O}(x^{n-1}),
\end{align*}
where $\kappa_n$ is given by
\begin{gather*} 
\kappa_n(a,b,c)=\begin{cases}
\qquad \frac{(1-c^2)^{\frac{n}{2}}\left(\frac{a+1}{2}\right)_{\frac{n}{2}}}{\left(\frac{n+a+b+2}{2}\right)_{\frac{n}{2}}} ,                  &\text{$n$ even,}
\\
\frac{(1+c)(1-c^2)^{\frac{n-1}{2}}\left(\frac{a+1}{2}\right)_{\frac{n+1}{2}}}{\left(\frac{n+a+b+1}{2}\right)_{\frac{n+1}{2}}},   &\text{$n$ odd.} \end{cases}
\end{gather*} 
The kernel polynomials $\widetilde{K}_n(x;a,b,c)$ associated to $\widehat{J}_{n}(x;a,b,c)$ are defined through the Christoffel transformation \cite{Chihara-2011}:
\begin{align}
\label{Chris}
\widetilde{K}_n(x;a,b,c)=\frac{\widehat{J}_{n+1}(x)-\frac{\widehat{J}_{n+1}(\nu)}{\widehat{J}_n(\nu)}\widehat{J}_n(x)}{x-\nu}.
\end{align}
For $\nu=1$, the monic polynomials $\widetilde{K}_n(x;a,b,c)$ can be expressed in terms of the Chihara polynomials. Indeed, one can show that \cite{Genest-2013-09-02}
\begin{align}
\label{Relation}
\widetilde{K}_n(x;a,b,c)=(\sqrt{1-c^2})^{n}\,C_n\left(\frac{x}{\sqrt{1-c^2}};\frac{b-1}{2},\frac{a+1}{2},\frac{-c}{\sqrt{1-c^2}}\right).
\end{align}
Let $\mathcal{M}$ be the orthogonality functional for the Big $-1$ Jacobi polynomials. In view of the relation \eqref{Chris}, one can write \cite{Chihara-2011}
\begin{align*}
 \mathcal{M}[(x-\nu)\;\widetilde{K}_n(x;a,b,c)\;x^k]=-\frac{\widehat{J}_{n+1}(\nu)}{\widehat{J}_{n}(\nu)}\mathcal{M}[\widehat{J}^2_n(x;a,b,c)]\,\delta_{kn}.
\end{align*}
By linearity, one thus finds
\begin{align*}
 \widetilde{\eta}_n=-\frac{\widehat{J}_{n+1}(\nu)}{\widehat{J}_{n}(\nu)}\hat{h}_n.
\end{align*}
where $\widetilde{\eta}_n=\mathcal{M}[(x-\nu)\;\widetilde{K}_n^2(x)]$ and $ \hat{h}_n=\mathcal{M}[\widehat{P}_n^2(x)]$. For $\nu=1$, the value of $\widetilde{\eta}_n$ is easily computed from  \eqref{Ortho-A} and \eqref{Relation}. The above relation then gives the desired coefficients.
\end{appendices}

\footnotesize
\begin{multicols}{2}

\end{multicols}
\end{document}